\documentclass[12pt]{amsart}
\usepackage{amsmath}
\usepackage{amsthm}
\usepackage{amssymb}
\usepackage{enumerate}
\usepackage{comment}
\newcommand{\n}{\par\noindent}

\newcommand{\sn}{\par\smallskip\noindent}
\newcommand{\mn}{\par\medskip\noindent}
\newcommand{\bn}{\par\bigskip\noindent}
\newcommand{\pars}{\par\smallskip}
\newcommand{\parm}{\par\medskip}

\newtheorem{theorem}{Theorem}
\newtheorem{lemma}[theorem]{Lemma}

\newtheorem{corollary}[theorem]{Corollary}

\newtheorem{remar}[theorem]{Remark}

\newcommand{\gzaun}{\unskip\nobreak\hfil\penalty50%
\hskip1em\hbox{}\nobreak\hfil%
$\#$\parfillskip=0pt\finalhyphendemerits=0}

\newcommand{\bfind}[1]{\index{#1}{\bf #1}}

\newcommand{\cO}{\mathcal{O}}
\newcommand{\cM}{\mathcal{M}}

\newcommand{\N}{\mathbb N}

\begin{document}            

\title[]{Large imperfect fields are existentially closed in function fields after
finite constant extension}

\subjclass[2020]{Primary 12J20, 12L12; secondary 12F20, 12J25, 14H05.}
\keywords{Funcion field, rational place, existentially closed, large field, local
uniformization, constant extension}

\author{Hagen Knaf}
\author{Franz-Viktor Kuhlmann}

\thanks{The authors thank Sylvy Anscombe and Arno Fehm for their valuable input.
The second author also wishes to thank the Hausdorff Research Institute for Mathematics
in Bonn for support and hospitality, as his stay at the Institute during the program
``Definability, decidability, and computability'' in 2025 gave
him the opportunity to discuss the contents of this paper with several colleagues.}

\date{31.\ 5.\ 2026}

\begin{abstract}
For an algebraic function field $F$ over a large field $K$, we show: \n
1) if $F|K$ has a rational place, then there is a finite purely inseparable extension
$K'|K$ such that $K'$ is existentially closed in $F.K'$; \n
2) $F|K$ has a rational place admitting local uniformization if and only if
$K$ is existentially closed in $F$.
\end{abstract}

\maketitle

%
%
\section{Introduction}
For notions and notation used in this introduction, see Section~\ref{sectprel}. In
what follows, by ``function field'' we always mean ``algebraic function field''.

In \cite{Ku20}, the following question is considered: if $F|K$ is a function field that
admits a rational place, under which additional assumptions does it follow that $K$ is
existentially closed in $F$ in the language of rings? The following statement is part
of \cite[Theorem~17]{Ku20}:
\begin{theorem}                             \label{KecF1}
If $K$ is a perfect large field, then $K$ is existentially closed in every extension
field $L$ which admits a $K$-rational place.
\end{theorem}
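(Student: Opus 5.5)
Since existential closedness is a finitary condition, we first reduce to the case where $L$ is a function field over $K$. Suppose an existential sentence with parameters from $K$ holds in $L$. It has a witness consisting of finitely many elements of $L$. Let $F=K(\text{witnesses})$, a finitely generated subextension of $L|K$. The $K$-rational place $P$ of $L$ restricts to a $K$-rational place of $F$, since the residue field of the restriction sits between $K$ and $LP=K$. If $\operatorname{trdeg} F|K=0$, then $F$ is algebraic over $K$. A place of an algebraic extension that is trivial on $K$ is trivial on all of $F$, so $F=FP=K$ and the witnesses already lie in $K$. Hence we may assume $F|K$ is a function field with a rational place $P$. It then suffices to show that $K$ is existentially closed in $F$.

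The key step is to find a smooth $K$-rational point on a model of $F$. Because $K$ is perfect, $F|K$ is separable, and we can write $F=K(V)$ for an affine variety $V$ defined over $K$ and geometrically integral over $K$. For $F|K$ to be regular we also need $K$ to be relatively algebraically closed in $F$. This holds: $P$ is trivial on the relative algebraic closure $K'$ of $K$ in $F$, so $K'$ embeds into $FP=K$, which forces $K'=K$. Since $K$ is perfect, $V$ has a smooth locus, and the place $P$ should give a smooth $K$-rational point. Here I would invoke local uniformization in a form valid over perfect fields (or resolution in dimension at most $2$ and the general results used in \cite{Ku20}) for the place $P$. This gives a model $V$ of $F$ on which the center of $P$ is a smooth point. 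That center is $K$-rational because $FP=K$. If general local uniformization is not available, I would use the trick from \cite{Ku20}: a rational place is the composition of places of the form "$K$-rational point on a curve." One can then pass through a chain of rational function fields and use that $K$ is large at each stage, so that simple points on curves suffice, where normalization always gives smoothness over a perfect field.

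Once we have a smooth $K$-rational point on a variety $V$ with function field $F$, largeness of $K$ gives that $V(K)$ is Zariski dense in $V$. Take the existential sentence, realized in $F$ by the generic point of $V$, or by rational functions on $V$. Clearing denominators, its truth amounts to the existence of a point of $V$ outside some proper closed subset $Z$ satisfying finitely many polynomial equations. These equations hold identically on $V$, because the generic point satisfies them. Zariski density gives a $K$-point of $V\setminus Z$, and this point witnesses the sentence in $K$. Hence $K$ is existentially closed in $F$, and therefore in $L$.

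The main obstacle is the second step: producing a \emph{smooth} $K$-rational point from the mere rational place. A rational place centered at a singular point does not by itself give density of $V(K)$. One needs local uniformization, or a reduction to curves via composition of places, to move the center to a simple point. Perfectness of $K$ is used exactly here, to ensure that regular points are smooth.
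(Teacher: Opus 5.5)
Your reduction to a finitely generated $F$, the proof that $F|K$ is regular, and the final step (density of $V(K)$ from one smooth $K$-rational point via (LF$'$), followed by the easy direction of Lemma~\ref{lem}) are fine. There is a genuine gap in the middle step, and that step carries the entire content of the theorem. Local uniformization of $P$ is not known in positive characteristic in dimension $>3$, even over algebraically closed (hence perfect) base fields. So there is no version ``valid over perfect fields'' to invoke.

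In fact your argument never uses perfectness. Separability of $F|K$ already follows from the rational place (Lemma~\ref{ratpl}), and a regular $K$-rational point is smooth over \emph{any} field $K$ (this is used in the proof of Theorem~\ref{MT2}). So your remark that perfectness is needed ``to ensure that regular points are smooth'' is mistaken. If your second step could be carried out as stated, the same proof would give (R4) for all large fields, which the paper says is only known conditionally on local uniformization. Indeed, by Theorem~\ref{MT2} and \cite[Cor.~A2]{JR}, for large $K$ the existence of a smooth $K$-rational point on some model of $F$ is \emph{equivalent} to $K\prec_\exists F$. Your reduction therefore just restates the claim.

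The fallback via composition of places does not close the gap either. A rational place is in general not a composite of places coming from rational points on curves, because its rank-one constituents can have non-discrete value groups. Examples are the monomial place on $K(x,y)$ with $v(x)=1$, $v(y)=\sqrt2$, or places of surfaces whose value group is a non-finitely generated subgroup of $\mathbb{Q}$. Such constituents are exactly where the difficulty lies. Moreover, the intermediate steps would take place over fields that are not large: rational function fields $K(x_1,\dots,x_i)$ (on which $P$ is in general not even trivial), or residue fields of coarsenings of $P$. So largeness cannot simply be applied ``at each stage''.

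What you actually need to extract from the possibly higher-rank, non-discrete place $P$ is one of the following:
\begin{itemize}
\item a \emph{discrete} rational place of $F|K$, which is equivalent to the conclusion by \cite[Lemma~9]{F};
\item an embedding of $F$ over $K$ into a field in which $K$ is existentially closed, such as an elementary extension of $K((t))$ (cf.\ Theorem~\ref{lfecKt}).
\end{itemize}

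The paper does not reprove this statement. It quotes it from \cite[Theorem~17]{Ku20} and uses it, for the perfect large field $K^{1/p^\infty}$, as the input to Theorem~\ref{MT}. So the remark that Theorem~\ref{MT} implies Theorem~\ref{KecF1} is not an independent proof. The argument in \cite{Ku20} cannot go through local uniformization, since otherwise perfectness would be superfluous (cf.\ \cite[Proposition~2.3]{ADF}). It supplies valuation-theoretic input, and that is where perfectness is genuinely used; your proposal has no substitute for it.
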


If local uniformization in positive characteristic holds, then by
\cite[Proposition~2.3]{ADF} the condition
``perfect'' in Theorem~\ref{KecF1} can be dropped and we obtain a principle that has
been introduced in \cite{ADF} as a basic condition for the main result in that article,
weaker than local uniformization:
\sn
{\bf (R4)}\; If $K$ is a large field, then $K$ is existentially closed in every
extension field $L$ which admits a $K$-rational place.

\pars
In this note our goal is to prove the following result. Our proof in
Section~\ref{sectprfs} is a slight modification of a proof suggested to us by Arno Fehm.
Likewise, also Corollary~\ref{corratpl} and Open Problem 2 are due to him.
\begin{theorem}                              \label{MT}
Take a large field $K$ and a function field $F|K$ which admits a rational place. Then
there is a finite purely inseparable extension $K'|K$ such that $K''$ is existentially
closed in the compositum $F.K''$ for every algebraic extension $K''|K'$.
\end{theorem}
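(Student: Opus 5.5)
Let $P$ be a rational place of $F|K$, $\cO_P$ its valuation ring and $\cM_P$ its maximal ideal. The strategy is to reduce to a situation where the place is centered at a \emph{smooth} point of a model of the function field. For a large field, a smooth $K$-rational point forces $K$ to be existentially closed in the function field of the variety. The role of the purely inseparable extension $K'$ is to make the relevant model smooth, which may fail over $K$ itself.

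First, choose a model. Write $F=K(x_1,\dots,x_n)$ with all $x_i\in\cO_P$; this is possible after replacing generators $x$ with $P(x)=\infty$ by $1/x$. Then $P$ determines a $K$-rational point $a=(x_1P,\dots,x_nP)$ of the affine variety $V$ with function field $F$. Next, pass to a finite purely inseparable extension $K'|K$ such that $(F.K')|K'$ has a separating transcendence basis and the integral closure of the model over $K'$ is smooth at the center of an extension $P'$ of $P$. Concretely:
\begin{enumerate}[(1)]
\item Adjoin $p^e$-th roots of the finitely many coefficients of defining equations. Since $F.K^{1/p^\infty}$ is a function field over a perfect field, it is separable. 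As function fields are finitely generated, already some finite $K'=K^{1/p^e}$ (restricted to finitely many elements) makes $F.K'|K'$ separable and the regular locus of the normalized model defined over $K'$.
\item Note that $P$ extends uniquely to $F.K'$, because the extension is purely inseparable, and the residue field stays $K'$. Hence $P'$ is $K'$-rational.
\end{enumerate}

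The main obstacle is that a rational place need not be centered at a smooth point of any model: this is exactly local uniformization. The step I would try first is to avoid resolving $V$ altogether and instead use the trick, presumably Fehm's, of embedding $F$ into the completion or henselization. Since $P$ is rational, $F$ embeds over $K$ into a field of generalized power series $K((t^\Gamma))$, or more simply into the henselization $F^h$ of $(F,P)$, whose residue field is $K$. I would then argue as follows. A large field $K$ is existentially closed in $K((t))$, and more generally in every henselian valued field with residue field $K$ and separable-tame conditions. Over an imperfect $K$, the missing ingredient is separability of $F|K$ and of the residue extension. After the finite purely inseparable extension $K'$, $F.K'|K'$ becomes separable. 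Using Abhyankar/Zariski-style reduction for the finitely many generators, I would then show that $P'$ is centered at a smooth point of some model over $K'$. In the rank-one discrete case this follows from the existence of a uniformizer and separability, via the Jacobian criterion.

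Once a smooth $K'$-rational point $a'$ of a model $V'$ of $F.K'$ is obtained, largeness gives that $V'(K')$ is Zariski dense in $V'$. Existential closedness then follows by the standard argument: an existential sentence with parameters from $K'$ true in $F.K'$ is witnessed by rational functions defined on a dense open subset $U\subseteq V'$. Hence it holds at some $K'$-point of $U$. For the final clause, take any algebraic $K''|K'$. Then $K''$ is again large, since algebraic extensions of large fields are large. The model $V'\times_{K'}K''$ is still smooth at $a'$, and smoothness is preserved under base change. Moreover $F.K''$ is its function field, because $F.K'|K'$ is regular: separable, with $K'$ relatively algebraically closed after possibly enlarging $K'$ finitely at the start. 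The same density argument therefore applies over $K''$.
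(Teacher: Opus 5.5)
The gap is the step where you ``show that $P'$ is centered at a smooth point of some model over $K'$''. That claim is local uniformization for the place $P'$. It is not known in positive characteristic beyond dimension three, and nothing in your sketch supplies it.

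A purely inseparable constant extension gives no leverage here. First, $F|K$ is already regular because it has a rational place (Lemma~\ref{ratpl}), so separability was never the obstruction. Second, on a base change $V\times_K K'$ the smooth locus is exactly the preimage of the smooth locus of $V$. The center of $P'$ lies over the center of $P$, so adjoining $p^e$-th roots of constants does not move the center into the smooth locus. You would still have to change the model, which is the local uniformization problem over $K'$.

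Your appeal to ``Abhyankar/Zariski-style reduction'' and to a uniformizer plus the Jacobian criterion addresses at most discrete rank-one places. Even there it is only asserted, while $P$ may have an arbitrary value group. The henselization detour does not help either. The claim that a large $K$ is existentially closed in every henselian field with residue field $K$ is equivalent to (R4), which is not known for imperfect $K$; that is exactly why a constant extension appears in the statement. By Theorem~\ref{MT2}, a $K'$-rational place admitting local uniformization already gives $K'\prec_\exists F.K'$. So you are in effect assuming something at least as strong as the conclusion.

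The missing idea is to give up the particular place $P$ and look only for \emph{some} rational point on a smooth model. Since $F|K$ is regular, there is a smooth, geometrically integral affine model $V=\mathrm{Spec}(A)$ over $K$ itself. The place $P$ extends to a rational place of $F.K^{1/p^\infty}$, and $K^{1/p^\infty}$ is perfect and large. So the perfect case, Theorem~\ref{KecF1}, gives $K^{1/p^\infty}\prec_\exists F.K^{1/p^\infty}$, and Lemma~\ref{lem} shows that $V\times_K K^{1/p^\infty}$ has a dense set of $K^{1/p^\infty}$-rational points.

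Take any one of them. It lies over a closed point of $V$ whose residue field $K'$ is a finite purely inseparable extension of $K$. This gives a $K''$-rational point, automatically smooth, of the smooth variety $V\times_K K''$ for every algebraic $K''|K'$. Largeness of $K''$ and Lemma~\ref{lem} then finish the argument exactly as in your last paragraph. The point obtained this way is in general unrelated to the center of $P$, which is why no local uniformization is needed.
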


Hence we can say: {\it if $L|K$ is a function field, then (R4) holds after a finite purely
inseparable constant extension}.
\sn
If $L|K$ is not finitely generated, then for arbitrary elementary sentences valid in
$L$, ever larger purely inseparable extensions of $K$ may be needed. However,
Theorem~\ref{MT} implies Theorem~\ref{KecF1}.

\pars
The following is part of \cite[Lemma 9]{F}:
\begin{lemma}
Take a large field $K$ and a function field $F|K$. Then $F|K$ admits a discrete rational
place if and only if $K$ is existentially closed in $F$.
\end{lemma}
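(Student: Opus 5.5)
We prove the two implications separately. In both directions we use the standard meaning of ``large'': $K$ is large if and only if $K$ is existentially closed in the Laurent series field $K((t))$. Equivalently, every smooth $K$-curve with a $K$-rational point has infinitely many $K$-rational points.

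\emph{($\Rightarrow$)} Let $P$ be a discrete rational place of $F|K$, with valuation ring $\cO_P$ and residue field $K$. The completion $\hat F$ of $F$ with respect to $P$ is a complete discretely valued field with residue field $K$. It contains $K$, and $K$ maps isomorphically onto the residue field. By Cohen's structure theorem in the equal characteristic case, $\hat F\cong K((t))$, and $K$ sits inside as the field of coefficients. There is a subtlety here: the coefficient field need not be unique in positive characteristic. However, we may choose it to contain the given copy of $K$, because $K\subseteq \cO_P$ already lifts the residue field. So we have $K\subseteq F\subseteq \hat F\cong K((t))$ over $K$.

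Now take an existential sentence with parameters from $K$ that holds in $F$. Existential sentences go up, so it holds in $K((t))$. Since $K$ is large, it then holds in $K$. Hence $K$ is existentially closed in $F$.

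\emph{($\Leftarrow$)} Assume $K$ is existentially closed in $F$. Then $F|K$ is regular: an algebraic element of $F$ over $K$ satisfies an existential formula with parameters in $K$ that pins it down, so its minimal polynomial has a root in $K$. In particular $F|K$ is separable.

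Since $F$ is finitely generated, write $F=K(V)$ with $V$ an affine variety over $K$. By generic smoothness, which applies because $F|K$ is separable, we may shrink $V$ to be smooth. The generic point of $V$ is an $F$-point of the nonempty open set $V$. Existential closedness then gives a point $x\in V(K)$ that is smooth. Moreover, we may require $x$ to avoid any prescribed proper closed subset, since that condition is existential too.

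At a smooth $K$-rational point $x$ of $V$ (say $\dim V=n$), the local ring $\cO_{V,x}$ is regular with residue field $K$. Choose a regular system of parameters $u_1,\dots,u_n$ there. The completion of $\cO_{V,x}$ is $K[[u_1,\dots,u_n]]$, and $\cO_{V,x}$ embeds into it, so $F$ embeds into the fraction field of $K[[u_1,\dots,u_n]]$.

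Compose this with a $K$-embedding $K[[u_1,\dots,u_n]]\to K[[t]]$ given by $u_i\mapsto t^{e_i}$ with suitable substitutions. For instance, send $u_i$ to power series in $t$ that are algebraically independent over $K(t)$, all of positive order. Such series exist because $K[[t]]$ has infinite transcendence degree over $K(t)$. This map is injective on $K[[u_1,\dots,u_n]]$, and hence injective on $\cO_{V,x}$.

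Restricting the $t$-adic valuation of $K((t))$ to $F$ then gives a discrete valuation whose residue field lies between $K$ and $K$, so it equals $K$. This is the desired discrete rational place.

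\emph{Main obstacle.} The delicate step is exactly this injectivity: we need a $K$-homomorphism $\cO_{V,x}\to K[[t]]$ that is injective with trivial kernel. I would first try the algebraically independent substitution argument above. If injectivity on the whole completion proves problematic, I would argue more simply: since $\cO_{V,x}$ is a countable union of finite-dimensional pieces, a generic choice of the series $t$-substitutes avoids each of the countably many, or finitely many per degree, kernel conditions.
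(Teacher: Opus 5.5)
Your direction ($\Rightarrow$) is correct. $K\subseteq\cO_P$ maps onto the residue field, so $K$ is itself a coefficient field of the completion, $\hat F=K((\pi))$ over $K$, and $K\prec_\exists K((t))$ finishes the argument. This is item 5) of Theorem~\ref{lfecKt}; the paper does not prove the lemma itself and simply cites \cite[Lemma 9]{F}.

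The converse has two genuine gaps. The first is separability. Your ``pinning down'' argument only shows that $K$ is relatively algebraically closed in $F$, and ``in particular $F|K$ is separable'' does not follow, because the two conditions are independent. For example, with $K=\mathbb{F}_p(a,b)$ and $y^p=ax^p+b$, $K$ is relatively algebraically closed in $K(x,y)$, yet $K(x,y)|K$ is inseparable. You cannot do without separability: a smooth $K$-rational point forces it, and without it you cannot shrink $V$ to a smooth variety. The missing idea is the one in Lemma~\ref{lem2}. If $F|K$ is inseparable, it is not linearly disjoint from $K^{1/p}|K$. After applying Frobenius you get $K^p$-linearly independent $a_1,\dots,a_n\in K$ and $x_1,\dots,x_n\in F$, not all zero, with $\sum_i x_i^pa_i=0$. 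This existential statement with parameters in $K$ fails in $K$.

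The second gap is the injectivity step, and the reason you give is false. For $n\ge2$, no homomorphism $K[[u_1,\dots,u_n]]\to K[[t]]$ with $u_i\mapsto s_i\in tK[[t]]$ is injective, whatever the $s_i$. By Weierstrass preparation, $K[[t]]$ is finite over $K[[s_1]]$, so $s_2^m+c_{m-1}(s_1)s_2^{m-1}+\dots+c_0(s_1)=0$ for some $c_j\in K[[X]]$. Then $u_2^m+c_{m-1}(u_1)u_2^{m-1}+\dots+c_0(u_1)\neq0$ lies in the kernel; if $s_1=t$, simply take $u_2-s_2(u_1)$. Similarly, $u_i\mapsto t^{e_i}$ already kills $u_1^{e_2}-u_2^{e_1}$. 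So ``hence injective on $\cO_{V,x}$'' is unsupported, and the fallback about avoiding countably many kernel conditions by a generic choice is only a heuristic.

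The correct argument checks injectivity only on the polynomial ring. By Cohen's theorem the $u_i$ are algebraically independent over $K$, and $n=\mathrm{trdeg}(F|K)$, so they form a transcendence basis of $F|K$. Let $\phi\colon\cO_{V,x}\to K[[t]]$ be your composite map and take $0\neq f\in\cO_{V,x}$. Clearing denominators in the minimal polynomial of $f$ over $K(u_1,\dots,u_n)$ gives $a_mf^m+\dots+a_1f+a_0=0$ with $a_j\in K[u_1,\dots,u_n]$ and $a_0\neq0$. If $\phi(f)=0$, then $\phi(a_0)=0$. This contradicts injectivity of $\phi$ on $K[u_1,\dots,u_n]$, which is exactly the algebraic independence of $s_1,\dots,s_n$ over $K$.

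With these two repairs your route is the standard one behind \cite[Cor.~A2]{JR} and \cite[Lemma 16.1.2]{FJ}. The paper uses those results, together with Lemma~\ref{lem2}, in the second half of the proof of Theorem~\ref{MT2}.
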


Therefore, Theorem~\ref{MT} is equivalent to:
\begin{theorem}                              \label{MT'}
Take a large field $K$ and a function field $F|K$ which admits a rational place. Then
there is a finite purely inseparable extension $K'|K$ such that for every algebraic extension $K''$ of $K'$, the function field $F.K''|K''$ admits a discrete
rational place.
\end{theorem}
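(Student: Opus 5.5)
Theorem~\ref{MT'} needs a finite purely inseparable $K'|K$ such that for every algebraic $K''|K'$ the function field $F.K''|K''$ has a discrete rational place. The plan is to produce this place geometrically: a smooth $K'$-rational point on a model of $F.K'|K'$ gives a discrete rational place, and both smoothness and rationality survive constant extension.

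\textbf{Step 1: choose a model and a point.} Let $P$ be a rational place of $F|K$. Write $F=K(x_1,\dots,x_n)$ with all $x_i$ in the valuation ring $\cO_P$; we may multiply or invert generators to arrange this. Let $V$ be the affine $K$-variety with function field $F$ and coordinate ring $K[x_1,\dots,x_n]$. Its center $a=(x_1P,\dots,x_nP)$ is a $K$-rational point of $V$. In general $a$ need not be smooth, or $V$ may not even be geometrically reduced, since $F|K$ need not be separable.

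\textbf{Step 2: make the variety smooth over a finite purely inseparable extension.} There is a finite purely inseparable $K_1|K$ such that $F.K_1|K_1$ is separable. This uses the standard fact that $F.K^{1/p^m}$ is separable over $K^{1/p^m}$ for large $m$, and only finitely many $p$-th roots of elements of $K$ are needed, because $F$ is finitely generated. Over $K_1$ the generic point of the model is smooth. However, the rational point $a$ may still lie in the singular locus, and we cannot resolve singularities. So instead of keeping the center of $P$, we use largeness.

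\textbf{Step 3 (main obstacle): find a smooth rational point.} Let $W$ be the singular locus, a proper closed subset. Largeness of $K$ says that a $K$-variety with a smooth $K$-point has Zariski-dense $K$-points. We have a rational point, but not a smooth one. Fehm's trick, as I expect it, is to pass to the normalization or to a suitable birational model over a further finite purely inseparable extension $K'|K_1$. Over a perfect field this is exactly the statement of Theorem~\ref{KecF1}: the existence of a rational place gives a smooth rational point, since large perfect fields are existentially closed in $F$. I would therefore try the following.

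\begin{enumerate}[(a)]
\item Finitely many elements of $K^{1/p^\infty}$ suffice to define the data witnessing this over the perfect hull $K^{1/p^\infty}$, which is large because it is algebraic over $K$.
\item Apply Theorem~\ref{KecF1} over $K^{1/p^\infty}$ to get a smooth rational point of a model of $F.K^{1/p^\infty}$. This point, the model, and the Jacobian criterion certifying its smoothness involve only finitely many constants.
\item Take $K'$ to be $K_1$ adjoined with these constants.
\end{enumerate}

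Over $K'$ we then have a smooth $K'$-rational point $b$ on a model of $F.K'|K'$. It remains to check that the model over the perfect hull descends. This is where the care lies: $F.K'$ must be linearly disjoint from $K^{1/p^\infty}$ over $K'$, which holds because $F.K'|K'$ is separable, hence regular in the relevant sense.

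\textbf{Step 4: stability under algebraic $K''|K'$.} For any algebraic extension $K''|K'$, the point $b$ remains a smooth $K''$-rational point of the base-changed model. Since $F.K'|K'$ is separable, $F.K''$ is the function field of that base change. A smooth rational point yields a discrete rational place, for example by blowing up $b$ and taking the order of vanishing along the exceptional divisor, or via a regular system of parameters giving a $K''$-rational place of rank one with value group $\mathbb Z$. This completes the argument.
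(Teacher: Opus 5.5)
Your strategy is correct in its core, which is the paper's proof of Theorem~\ref{MT}. Pass to the perfect hull $K^{1/p^\infty}$ (large, being algebraic over $K$). Apply Theorem~\ref{KecF1} there to get a smooth rational point on the base change of a model over $K$. Let $K'$ be generated by the finitely many coordinates of that point.

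\textbf{The gap.} It sits in the last sentence of Step~4: the passage from ``smooth $K''$-rational point'' to ``discrete rational place''. This is exactly what separates Theorem~\ref{MT'} from Theorem~\ref{MT}.
\begin{enumerate}[(i)]
\item Your first suggestion fails whenever $d=\mathrm{trdeg}\,F|K\ge 2$. The order of vanishing along the exceptional divisor $E\cong\mathbb{P}^{d-1}_{K''}$ of the blow-up at $b$ is a discrete valuation. But its residue field is the function field of $E$, of transcendence degree $d-1$ over $K''$, so the place is not rational.
\item Your second suggestion names the conclusion without a construction. The obvious (monomial) valuations built from a regular system of parameters $t_1,\dots,t_d$ are rational only for rationally independent weights. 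In that case the value group is $\mathbb{Z}^d$, not $\mathbb{Z}$.
\end{enumerate}

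\textbf{One fix.} Identify the completed local ring of $V\times_K K''$ at $b$ with $K''[[t_1,\dots,t_d]]$. Map $t_i\mapsto s_i\in tK''[[t]]$, with $s_1,\dots,s_d$ algebraically independent over $K''$. This is injective on the local ring $\cO$: the $t_i$ form a transcendence basis, and a nonzero element of $\cO$ satisfies a relation over $K''[t_1,\dots,t_d]$ with nonzero constant term. Hence $F.K''$ embeds over $K''$ into $K''((t))$, and the $t$-adic valuation restricts to a rational place with value group $\mathbb{Z}$. This route needs no largeness of $K''$.

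The paper argues differently. $K''$ is large, so (LF$'$) and Lemma~\ref{lem} give $K''\prec_\exists F.K''$. Fehm's Lemma~9, quoted in the introduction, then turns existential closedness into a discrete rational place.

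\textbf{Smaller points.}
\begin{enumerate}[(i)]
\item Step~2 rests on a false premise. By Lemma~\ref{ratpl}, $F|K$ is already regular, so you may take $K_1=K$. Regularity, not mere separability of $F.K'|K'$, is what makes $V\times_K K''$ integral with function field $F.K''$ when $K''|K'$ has a separable part.
\item To invoke Theorem~\ref{KecF1}, you must note that $P$ extends uniquely to a $K^{1/p^\infty}$-rational place of $F.K^{1/p^\infty}$.
\item Getting a \emph{smooth} point from existential closedness needs justification. Either use Lemma~\ref{lem} (dense rational points meet the nonempty smooth locus), or use the existential sentence asserting the equations of $V$ together with nonvanishing of a suitable Jacobian minor, which holds at the generic point.
\item If you start, as the paper does, from a smooth affine model $V$ of $F|K$ over $K$, every rational point of the base change is smooth. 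The descent discussion at the end of Step~3 then disappears.
\end{enumerate}
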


Note that if $F.K'|K'$ admits a discrete rational place $P$, then $FP$ is a finite
purely inseparable extension of $K$. Hence one is led to:
\sn
{\bf Open problem 1:}  Assume that $F|K$ admits a discrete place $P$ whose residue field
is a finite purely inseparable extension of $K$. Under which additional assumptions does
it follow that there is a finite purely inseparable extension $K'|K$ such that the unique
extension of $P$ to $F.K'$ has residue field $K'$?

\sn
It is known that $K'$ may not exist for a function field $F$ over a field $K$ of
characteristic $p>0$ and infinite degree of imperfection. Indeed, take a sequence
$(a_i)_{i\in\N}$ in $K$ of $p$-independent elements, set $s=\sum_na_it^{ip}\in K[[t]]$,
and take $F=K(t,s^{1/p})$. Then the residue field of $F$ under the $t$-adic valuation
is $K_1=K(a_0^{1/p})$, but extending the constant field to $K_1$ will yield
residue field $K_2=K_1(a_1^{1/p})$, and so on. This is a construction given by Masayoshi
Nagata in \cite[Appendix, Example~(E3.1), pp.~206-207]{Na}, cf.\ \cite[Example 3.23
and Theorem 3.24]{Ku31}.

We do not know whether there is such
an example with a field $K$ of finite degree of imperfection.

\parm
If we want to avoid the purely inseparable extension appearing in Theorem \ref{MT}, we
have to invoke local uniformization, which presently is not known to hold in positive
characteristic for dimensions higher than three.
\begin{theorem}                             \label{MT2}
For a function field $F$ over a large field $K$ the following conditions are
equivalent:
\begin{enumerate}

\item There exists a rational place admitting local uniformization.

\item $K$ is existentially closed in $F$.

\end{enumerate}
\end{theorem}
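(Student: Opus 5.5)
We prove the two implications separately, and in both directions we reduce to the criterion of Lemma~[F, Lemma 9]: for a large field $K$, the field $K$ is existentially closed in $F$ if and only if $F|K$ admits a discrete rational place. So it suffices to show that $F|K$ has a rational place admitting local uniformization if and only if $F|K$ has a discrete rational place.

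\emph{(1)$\Rightarrow$(2).} Let $P$ be a rational place of $F|K$ admitting local uniformization. The plan has four steps.
\begin{enumerate}
\item Choose a model of $F|K$, i.e.\ an affine variety $V$ over $K$ with function field $F$, such that the center $x$ of $P$ on $V$ is a regular (indeed smooth) point.
\item Since $FP=K$, the point $x$ is $K$-rational.
\item Take a regular system of parameters $t_1,\dots,t_n$ at $x$, where $n=\mathrm{trdeg}\,F|K$. Then $\cO_{V,x}/(t_1,\dots,t_{n-1})$ is a discrete valuation ring with residue field $K$.
\item Its fraction field is the function field of the curve cut out by $t_1=\dots=t_{n-1}=0$. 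Composing places along the flag $V\supset V(t_1)\supset\cdots$, or equivalently using the $t_1$-adic valuation followed by the residue valuations, gives a discrete rank-$n$ rational place of $F$.
\end{enumerate}

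Here ``discrete'' is taken in the sense of [F], i.e.\ with value group $\mathbb Z^n$ lexicographically ordered. If [F] instead means ``discrete of rank one with rational residue field'', the construction is replaced as follows. The $K$-rational smooth point $x$ yields $F\subseteq K((t_1))\cdots((t_n))$, which is the regularity argument: the completion $\widehat{\cO}_{V,x}\cong K[[t_1,\dots,t_n]]$ by Cohen's structure theorem, since the residue field is $K$ and the local ring is regular. Either way one gets an embedding of $F$ into an iterated Laurent series field over $K$. Such a field is known to be existentially closed-compatible with $K$ when $K$ is large, and this is exactly how largeness enters.

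\emph{(2)$\Rightarrow$(1).} If $K$ is existentially closed in $F$, then by Lemma~[F, Lemma 9] there is a discrete rational place $Q$ of $F|K$. The goal is to show that $Q$, or some rational place obtained from it, admits local uniformization.
\begin{enumerate}
\item Embed $F$ into an elementary extension $K^*$ of $K$ over $K$; this is possible since $K$ is existentially closed in $F$ and $F$ is finitely generated.
\item Use largeness to find a smooth $K$-rational point. The model $V$ has a $K^*$-point that is generic, hence a smooth point. By existential closedness, the smooth locus $V^{\mathrm{sm}}$ has a $K$-rational point $x$.
\item At $x$, construct a rational place $P$ centered there by composing along a regular system of parameters, as in the first direction.
\end{enumerate}
Such a $P$ is centered on the smooth model $V$, so it admits local uniformization trivially: its center is a regular point, and every finite set of elements of $\cO_P$ can be handled by passing to an open affine neighborhood of $x$ on $V$ on which those elements are regular. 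That last requirement needs some care, so step~3 should instead be carried out by choosing a place $P$ adapted to given functions through successive blow-ups at smooth rational points. Since blow-ups at smooth centers keep the point smooth, this last part is routine.

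The main obstacle is making precise in (1)$\Rightarrow$(2) that a smooth $K$-rational point of a model yields a discrete rational place in the exact sense used in [F]. The cleanest route avoids places entirely: from a smooth $K$-rational point, conclude directly that $K$ is existentially closed in $F$ because $K$ is large. This uses the standard characterization that for large $K$, a $K$-variety with a smooth $K$-point has Zariski-dense $K$-points, and that $K$ is existentially closed in $F$ exactly when every model has Zariski-dense $K$-points. I would rely on this Pop-style characterization rather than the lemma when writing the proof.
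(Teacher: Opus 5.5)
Your final route for (1)$\Rightarrow$(2) is the paper's argument: a regular $K$-rational center is smooth, (LF$'$) applied to $V_{\mathrm{sm}}$ gives Zariski-dense $K$-points, and Lemma~\ref{lem} concludes. Your (2)$\Rightarrow$(1) also has the paper's shape, and your explicit flag construction along a regular system of parameters is a correct, self-contained replacement for the cited [JR, Cor.~A2].

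\textbf{The gap.} It is in step~2 of (2)$\Rightarrow$(1): ``a $K^*$-point that is generic, hence a smooth point''. The generic point lies in $V_{\mathrm{sm}}$ if and only if $F|K$ is separable. For an inseparable function field, e.g.\ $y^p=a$ in $\mathbb{A}^2_K$ with $a\notin K^p$, the smooth locus is empty, and your transfer argument produces nothing. You never show that $K\prec_\exists F$ forces separability, and this is not automatic. It is exactly the content of the paper's Lemma~\ref{lem2}, which rules out inseparability using the existential sentence $\exists x_1\ldots\exists x_n\colon\sum_i x_i^pa_i=0$ for suitable $K^p$-linearly independent $a_i\in K$.

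\textbf{Fixing it.} Either cite Lemma~\ref{lem2}, or avoid separability altogether by working with the regular locus instead of the smooth locus. The regular locus is open and always contains the generic point. Existential closedness then yields a regular $K$-rational point, and that is all your flag construction and the paper's definition of local uniformization require. Such a point is then automatically smooth.

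\textbf{Cleanup.} Delete your last paragraph about making finitely many elements of $\cO_P$ regular by blow-ups. Under the paper's definition, $P$ admits local uniformization as soon as its center on some model is regular, so you are done once $P$ is centered at $x$. The stronger property is also not routine as you claim, for three reasons:
\begin{enumerate}
\item $\cO_P$ is much larger than $\cO_{V,x}$; for your flag place, $t_1/t_2\in\cO_P\setminus\cO_{V,x}$.
\item The place must be fixed before the finite set is given, so ``a place adapted to given functions'' gets the quantifiers wrong.
\item For your rank-$n$ place the property needs a real argument; it is an instance of the theorem that Abhyankar places admit local uniformization, not a consequence of smooth blow-ups preserving smoothness.
\end{enumerate}
The opening reduction to [F, Lemma~9] and the place $Q$ are never used and can be dropped. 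Your iterated Laurent series remark, on the other hand, is a valid alternative for (1)$\Rightarrow$(2) once made precise. Cohen's theorem gives $F\hookrightarrow K((t_1))\cdots((t_n))$ over $K$. Theorem~\ref{lfecKt} gives $K\prec_\exists K((t))$ for large $K$, and you may iterate because henselian fields such as $K((t_1))$ are again large.
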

The implication $(1)\Rightarrow (2)$ has already been stated in \cite[Theorem 19]{Ku20}.
However, its proof uses \cite[Theorems 13 and 14]{Ku20}, but the proof of Theorem 13 is
flawed. In order to repair it, one might attempt to use our new proof of implication
$(1)\Rightarrow (2)$ together with the fact that the large field $K$ being
existentially closed in $F$ implies that $F|K$ admits a discrete rational place $Q$
(see \cite[Lemma 9]{F}). However, Theorem 13 claims more, namely that $Q$ can be chosen
as close to $P$ as we want in the patch topology.

%
%
\section{Preliminaries}                          \label{sectprel}
For basic facts from valuation theory, see \cite{EP}.

\pars
Let $L|K$ be a field extension. We will say that $K$ is \bfind{existentially closed} in
$L$ if every existential sentence in the ring (or field) language with parameters
from $K$ holds in $K$ if it holds in $L$.

\pars
Take a field $K$. A \bfind{$K$-variety} is an integral separated $K$-scheme $V$ of
finite type; if $V$ has dimension one, then it is called a \bfind{$K$-curve}. As usual
the function field of a $K$-variety $V$ is denoted by $K(V)$ and for any extension field
$L$ of $K$ the set of $L$-rational points of $V$ is denoted by $V(L)$. The subset
$V_\mathrm{sm}$ of smooth points of $V$, called the \bfind{smooth locus of $V$}, is
Zariski-open and thus a $K$-variety if it is nonempty (see \cite[Corollary 8.2]{Kunz});
the latter holds if and only if $K(V)|K$ is separable, i.e., linearly disjoint from
$K^{1/p^\infty}|K$, where $K^{1/p^\infty}$ denotes the perfect hull of $K$. If
$V=V_\mathrm{sm}$, then the $K$-variety $V$ is said to be \bfind{smooth}.

\pars
A $K$-variety $V$ is \bfind{geometrically integral} if the scheme $V\times_K
\widetilde{K}$ is a $\widetilde{K}$-variety, where $\widetilde{K}$ denotes the
algebraic closure of $K$. According to \cite[Corollaire 4.6.3]{G}, this is the case
if  and only if the extension $K(V)|K$ is \bfind{regular}, that is, $K(V)|K$ and
$\widetilde{K}|K$ are linearly disjoint. This in turn holds if and only if
$K$ is relatively algebraically closed in $K(V)$ and $K(V)|K$ is separable (cf.\ \cite[Chapter VIII, \S4]{La}).

\pars
A \bfind{model} of a given function field $F|K$ is a $K$-variety $V$ with the property
$K(V)\cong_K F$. A place $P$ of a function field $F|K$ is called \bfind{rational} if it
is the identity on $K$ and $FP=K$. The existence of a rational place $P$ on $F|K$
implies the existence of a model $V$ possessing a $K$-rational point. This is seen as
follows. Take a transcendence base $T$ in the valuation ring $\cO_P$ of the place $P$
and denote by $A$ the integral closure of $K[T]$ in $\cO_P$. Then $A$ is finitely
generated over $K$ and $V:=\mathrm{Spec}(A)$ is a model with the $K$-rational point
$x:=\cM_P\cap A$, where $\cM_P$ denotes the maximal ideal of $\cO_P\,$. However, in
general such a point $x$ need not be regular.

\pars
If there is a model $V$ for which the center of $P$ on $V$
is regular, we say that $P$ admits \bfind{local uniformization}.

\pars
Following Florian Pop (see \cite{P1,P2}), a field $K$ is called a large field if it
satisfies one of the following equivalent conditions:
\sn
{\bf (LF)} \ {\it For every smooth $K$-curve $C$ the set $C(K)$ is infinite or empty.}
\sn
{\bf (LF$'$)} \ {\it In every smooth $K$-variety $V$ the set $V(K)$ is Zariski-dense or
empty.}
\sn
{\bf (LF$''$)} \ {\it For every function field $F|K$ in one variable
the set of rational places is infinite or empty.}

\parm
Various characterizations of large fields using existential closedness are given by
\cite[Theorem 15]{Ku20}:
\begin{theorem}                             \label{lfecKt}
The following conditions are equivalent:
\sn
1) \ $K$ is a large field,
\sn
2) \ $K$ is existentially closed in every function field $F$ in one
variable over $K$ which admits a $K$-rational place,
\sn
3) \ $K$ is existentially closed in the henselization $K(t)^h$ of the
rational function field $K(t)$ with respect to the $t$-adic valuation,
\sn
4) \ $K$ is existentially closed in the field $K((t))$ of formal Laurent
series,
\sn
5) \ $K$ is existentially closed in every extension field which admits a
discrete $K$-rational place.
\end{theorem}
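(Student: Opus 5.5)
The plan is to prove the cycle $1)\Rightarrow 2)\Rightarrow 3)\Rightarrow 4)\Rightarrow 5)\Rightarrow 1)$. Throughout I use the standard reduction that an existential sentence with parameters from $K$ which holds in an extension $L$ already holds in a finitely generated subextension of $L|K$, because its witnesses are finitely many elements.

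$1)\Rightarrow 2)$: let $F|K$ be a function field in one variable with a rational place $P$. Take an affine normal model $C$ of $F|K$ on which $P$ has a center $x$. Since $C$ is a normal curve, $\cO_{C,x}=\cO_P$ is a DVR, so $x$ is a regular point. It is also $K$-rational, and at a rational point regular and smooth coincide, so $x\in C_{\mathrm{sm}}(K)$. An existential sentence true in $F$ has witnesses which are regular functions on some nonempty open $U\subseteq C_{\mathrm{sm}}$. Its equalities then hold identically on $U$, and its inequalities hold on a smaller nonempty open $U'$. By (LF), $C_{\mathrm{sm}}(K)$ is infinite, hence Zariski-dense in the curve, so it meets $U'$. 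Evaluating the witnesses at such a point gives witnesses in $K$.

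$2)\Rightarrow 3)$: a finitely generated subfield $F\supseteq K(t)$ of $K(t)^h$ is a function field in one variable. The restriction of the $t$-adic valuation of $K(t)^h$ to $F$ has residue field $K$, so it is a rational place. Hence 2) applies to $F$.

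$3)\Rightarrow 4)$: this is the step I expect to be the main obstacle. One needs that $K(t)^h$ is existentially closed in its completion $K((t))$. I would use that $K((t))|K(t)^h$ is separable and that $K(t)^h$ is henselian and discretely valued. Then a Greenberg/Artin-type approximation argument lets one approximate a solution in $K((t))$ of a finite system of equations and inequations by a solution in $K(t)^h$. Alternatively, one can argue that $K(t)^h$ is existentially closed in $K((t))$ via Ax--Kochen--Ershov type results for separable immediate extensions of henselian fields. Once this is settled, transitivity of existential closedness gives 4).

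$4)\Rightarrow 5)$: let $L$ have a discrete $K$-rational place, and let $F\subseteq L$ be a finitely generated subextension containing the witnesses. The restriction of the place to $F$ is still discrete with residue field $K$. The completion $\hat F$ is complete, discretely valued, of equal characteristic, with residue field $K$. Since $K\subseteq F$ maps isomorphically onto the residue field, $K$ itself is a coefficient field, and Cohen's structure theorem gives $\hat F\cong_K K((t))$. So $F$ embeds over $K$ into $K((t))$, and 4) applies.

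$5)\Rightarrow 1)$: let $C$ be a smooth affine $K$-curve with a point $x\in C(K)$. Then $\cO_{C,x}$ is a DVR with residue field $K$, which yields a discrete rational place of $K(C)|K$. By 5), $K$ is existentially closed in $K(C)$. Suppose $C(K)=\{x_1,\dots,x_n\}$ were finite. The existential sentence ``there is a point of $C$ different from $x_1,\dots,x_n$'' holds in $K(C)$, witnessed by the generic point. Hence it would hold in $K$, which is a contradiction. So (LF) holds.
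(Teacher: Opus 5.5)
The paper gives no proof of this statement. It is quoted from \cite[Theorem 15]{Ku20} as background, so there is no internal argument to compare yours with. Judged on its own, your cycle is correct and is the standard argument.

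Steps 1)$\Rightarrow$2), 2)$\Rightarrow$3) and 5)$\Rightarrow$1) are complete; in 5)$\Rightarrow$1) you should first replace $C$ by an affine open neighbourhood of $x$, which is harmless. In 4)$\Rightarrow$5) you do not need the finitely generated subfield $F$ at all, and its restricted place could even become trivial unless you adjoin a uniformizer. The completion of $L$ itself is already $K$-isomorphic to $K((t))$, by expanding in a uniformizer with coefficients from $K$.

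The only input that is not self-contained is $K(t)^h\prec_\exists K((t))$ in 3)$\Rightarrow$4), and you name the right hypotheses. Make the separability explicit:
\begin{itemize}
\item If $B$ is a $p$-basis of $K$, then $B\cup\{t\}$ is a $p$-basis of $K(t)$, and also of its separable algebraic extension $K(t)^h$.
\item This set stays $p$-independent in $K((t))$: in a relation with coefficients in $K^p((t^p))$, compare the coefficients of $t^{pk+e}$ for $0\le e<p$.
\end{itemize}
With that in hand, clear denominators by powers of $t$. Then Greenberg's theorem for henselian discrete valuation rings with separable completion applies, or equivalently Artin approximation for the henselization of $K[t]_{(t)}$. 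Inequations survive because you can approximate to arbitrarily high $t$-adic order.

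You should drop the Ax--Kochen--Ershov alternative. As a general principle it is false. A henselian field $E$ with an Artin--Schreier defect extension $E(\vartheta)$, where $\vartheta^p-\vartheta=a\in E$, is not existentially closed in this separable immediate extension; the sentence $\exists x\colon x^p-x=a$ witnesses this. Moreover, for $K(t)^h$ every immediate extension embeds into $K((t))$. So the statement you would be invoking is just a reformulation of $K(t)^h\prec_\exists K((t))$, not an independent route to it.
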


\mn
%
%
\section{Proofs}                          \label{sectprfs}
One of the main ingredients in the proofs of Theorem \ref{MT} and Theorem \ref{MT2} is
a geometric characterization of existential closedness of a field within a finitely
generated extension, due to Pop \cite[Fact 2.3]{P2}:
\begin{lemma}                                     \label{lem}
Let $K$ be a field and $V$ a $K$-variety. Then $K\prec_\exists K(V)$
if and only if $V(K)$ is Zariski-dense in $V$.
\end{lemma}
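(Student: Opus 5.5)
Both directions rest on two translations: existential closedness of $K$ in $K(V)$ becomes a statement about the $K$-points of a finite system of polynomial equations and one inequation, and Zariski-density of $V(K)$ becomes such a statement as well. Since everything is invariant under $K$-isomorphism of function fields, we may pass to an affine open $U=\mathrm{Spec}(A)\subseteq V$ with $A=K[x_1,\dots,x_n]/I$ and $I$ prime. This is harmless: $V$ is irreducible, so $U$ is dense in $V$ and $V(K)$ is dense in $V$ if and only if $U(K)$ is dense in $U$. Throughout, $F=K(V)=\mathrm{Frac}(A)$, and $a_1,\dots,a_n$ denote the images of the $x_i$ in $A$.

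For ``$\Rightarrow$'', take a nonempty open $W\subseteq U$; we must show $W(K)\neq\emptyset$. Shrinking $W$, we may assume it is a basic open $D(g)$ with $g\in K[x_1,\dots,x_n]$ and $g(a)\neq 0$ in $A$. Choose generators $f_1,\dots,f_m$ of $I$. The existential sentence
\[
\exists y_1\dots y_n\;\Bigl(\textstyle\bigwedge_j f_j(y)=0\wedge g(y)\neq 0\Bigr),
\]
with parameters from $K$, holds in $F$, witnessed by $y=a$. Since $K\prec_\exists F$, it holds in $K$, and a solution $b\in K^n$ is a $K$-rational point of $U$ lying in $D(g)$. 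Hence $U(K)$ meets every nonempty open set and is dense.

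For ``$\Leftarrow$'', take an existential sentence with parameters in $K$ that holds in $F$. Over fields, such a sentence can be put in disjunctive normal form, with $s\neq 0$ rewritten as $\exists z\,(sz=1)$; then it is a disjunction of sentences $\exists y\,\bigwedge_k h_k(y)=0$. Choose a disjunct that holds in $F$, with witness $c\in F^r$. Writing each $c_i=p_i(a)/q(a)$ with $p_i,q$ polynomials over $K$ and $q(a)\neq 0$, clearing denominators gives
\[
h_k\bigl(p(x)/q(x)\bigr)=\frac{H_k(x)}{q(x)^{N}},\qquad H_k\in K[x_1,\dots,x_n],\ H_k(a)=0,
\]
so each $H_k$ vanishes identically on $U$. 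By density, $U(K)$ meets the nonempty open set $D(q)\cap U$ in some point $b\in K^n$. At $b$ we have $H_k(b)=0$ and $q(b)\neq 0$, hence $h_k(p(b)/q(b))=0$ for all $k$, and $y=p(b)/q(b)\in K^r$ witnesses the disjunct in $K$. Therefore $K\prec_\exists F$.

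The only delicate step is the bookkeeping in ``$\Leftarrow$'': the witnesses are rational functions, so specialization must avoid their poles, and the inequations must be absorbed so that only identities on $U$ remain to be transferred. Both are handled by intersecting with $D(q)$, where density is used.
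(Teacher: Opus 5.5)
Your proof is correct. The paper gives no argument of its own for this lemma; it only cites it as Pop's \cite[Fact~2.3]{P2}. So there is no competing proof to compare with, and what you supply is the standard direct argument behind that fact.

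Your argument runs as follows:
\begin{itemize}
\item You reduce to an affine open $U=\mathrm{Spec}(A)$, which is harmless by irreducibility of $V$.
\item For ``$\Rightarrow$'', the generic point $a$ witnesses $\bigwedge_j f_j(y)=0\wedge g(y)\neq 0$ in $F$. Transferring this sentence to $K$ produces a $K$-point in $D(g)$.
\item For ``$\Leftarrow$'', you write a witness in $F$ as $p(a)/q(a)$. Clearing denominators gives polynomials $H_k\in I$. You then use density of $U(K)$ only to find a $K$-point outside the pole locus, that is, in $D(q)$, where the identity $H_k(b)=q(b)^N h_k(p(b)/q(b))$ can be specialized.
\end{itemize}

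Spelling this out makes the paper self-contained at this point. It also shows that only integrality and finite type of $V$ are used, not separatedness.
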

The following result is well known, but the only reference we know is
\cite[Corollary 3.1.3]{E}, where it is derived from a more involved context. For the
convenience of the reader, we will give a direct proof.
\begin{lemma}                                     \label{lem2}
Let $F|K$ be a function field with the property $K\prec_\exists F$,
then $F|K$ is a regular extension.
\end{lemma}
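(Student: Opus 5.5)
We have to show that $K$ is relatively algebraically closed in $F$ and that $F|K$ is separable; by the characterization recalled in Section~\ref{sectprel}, these two facts together give regularity. Both follow from one principle: if $K\prec_\exists F$, then every polynomial over $K$ that has a root in $F$ already has a root in $K$, and more generally every finite system of polynomial equations over $K$ solvable in $F$ is solvable in $K$.

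\emph{Relative algebraic closedness.} Let $a\in F$ be algebraic over $K$, with minimal polynomial $f\in K[X]$. The sentence $\exists x\, f(x)=0$ holds in $F$, so $f$ has a root in $K$. Since $f$ is irreducible over $K$, it must be linear, and hence $a\in K$.

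\emph{Separability.} This only matters in characteristic $p>0$. Separability of $F|K$ is equivalent to linear disjointness of $F$ and $K^{1/p}$ over $K$ (Mac Lane's criterion). It therefore suffices to show: if $c_1,\dots,c_n\in K$ are such that $c_1^{1/p},\dots,c_n^{1/p}$ are $K$-linearly independent, then they remain $F$-linearly independent. Suppose instead that $\sum_i y_i c_i^{1/p}=0$ with $y_i\in F$ not all zero. Raising to the $p$-th power gives $\sum_i y_i^p c_i=0$. So the existential sentence
\[\exists y_1\dots y_n\exists z\;\Bigl(\sum_i c_i y_i^p=0\ \wedge\ z\cdot\prod\nolimits_{i}(\ldots)\Bigr)\]
holds in $F$. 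More simply, we express ``not all $y_i$ are zero'' as a finite disjunction $\bigvee_j \exists z\,(y_jz=1)$, which is still existential. By existential closedness, there are $b_1,\dots,b_n\in K$, not all zero, with $\sum_i c_i b_i^p=0$. Taking $p$-th roots in $K^{1/p}$ gives $\sum_i b_i c_i^{1/p}=0$, a nontrivial $K$-linear relation, which is a contradiction. Hence $F$ and $K^{1/p}$ are linearly disjoint over $K$, and $F|K$ is separable.

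The only step needing care is the reduction of separability to a statement about $p$-th roots via Mac Lane's criterion. Once that is in place, it translates into the single polynomial equation $\sum c_iy_i^p=0$, whose solvability with a nonzero tuple transfers from $F$ down to $K$. Combining the two parts, $F|K$ is regular.
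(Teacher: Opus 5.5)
Your proof is correct and follows essentially the same route as the paper. Relative algebraic closedness uses the same irreducible-polynomial argument, and separability uses the same reduction to linear disjointness from $K^{1/p}$ followed by transferring a nontrivial solution of $\sum_i c_i y_i^p=0$ from $F$ down to $K$; the only difference is that the paper proves the Mac Lane reduction from $K^{1/p^\infty}$ to $K^{1/p}$ inline by a minimality argument, where you cite it. Your clause $\bigvee_j\exists z\,(y_jz=1)$ is genuinely needed, since the sentence is otherwise satisfied by $y_i=0$ (the paper's display leaves this implicit), but you should delete the garbled first display with $z\cdot\prod_i(\ldots)$.
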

\begin{proof}
Suppose that $K$ is not relatively algebraically closed in $L$.
Then there exists an irreducible polynomial $f(X)=X^n+c_{n-1}X^{n-1}
+\ldots +c_0\in K[X]$ which has a root in $L$ but not in $K$. The existential
sentence
\[
\exists x:\> x^n+c_{n-1}x^{n-1}+\ldots+c_0\>=\>0
\]
with parameters in $K$
expresses the existence of a root of $f$. By our choice of $f$, it holds
in $L$ but not in $K$. Consequently, $K$ is not existentially closed in $L$.

Now suppose that $L|K$ is not separable, i.e., not linearly disjoint from
$K^{1/p^{\infty}}|K$.
Then there are $K$-linearly independent elements $x_1,\ldots,x_n\in L$
which are not $K^{1/p^{\infty}}$-linearly independent. Choose $m$
minimal such that there are $y_1,\ldots,y_n\in K^{1/p^m}$ with $\sum_{i}
x_iy_i=0$. Then $m\geq 1$, and by the minimality of $m$, $x_1^{p^{m-1}},\ldots,
x_n^{p^{m-1}}$ are $K$-linearly independent.
But $x_1^{p^{m-1}},\ldots,x_n^{p^{m-1}}$ are not $K^{1/p}$-linearly
independent since $\sum_{i} x_i^{p^{m-1}} y_i^{p^{m-1}}=0$ with
$y_i^{p^{m-1}}\in K^{1/p}$. This proves that $L|K$ is not linearly
disjoint from $K^{1/p}|K$.

The Frobenius induces an isomorphism from $L$ onto $L^p$, from $K^{1/p}$ onto $K$, and
from $K$ onto $K^p$. Hence it follows that $L^p|K^p$ is not linearly disjoint
from $K|K^p$. Choose $K^p$-linearly independent elements $a_1,\ldots,a_n\in K$ which
are not $L^p$-linearly independent. Then the existential
sentence
\[
\exists x_1\,\exists x_2\,\ldots\,\exists x_n:\>
x_1^p a_1+\ldots+x_n^p a_n\>=\>0
\]
with parameters in $K$ holds in $L$ but not in $K$, showing again that $K$ is not
existentially closed in $L$.
\end{proof}

\cite[Lemma 2.6.9]{FJ} provides another criterion for regularity relevant in the present
context:
\begin{lemma}                         \label{ratpl}
Let $F|K$ be a function field which admits a rational place, then $F|K$ is
regular.
\end{lemma}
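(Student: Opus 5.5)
We must show: if $F|K$ admits a rational place $P$, then $F|K$ is regular. By the characterization recalled in Section~\ref{sectprel}, this means two things: $K$ is relatively algebraically closed in $F$, and $F|K$ is separable. The plan is to prove each separately, using only the place $P$ and the fact that $FP=K$ with $P$ the identity on $K$.

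\emph{Relative algebraic closedness.} Take $a\in F$ algebraic over $K$. Since $a$ is integral over $K\subseteq\cO_P$, we have $a\in\cO_P$. Write $f\in K[X]$ for the minimal polynomial of $a$. Applying $P$, which fixes $K$, shows that $aP\in FP=K$ is a root of $f$. Because $f$ is irreducible over $K$ and has a root in $K$, it is linear, so $a\in K$.

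\emph{Separability.} This is the main step. I would use the criterion already isolated in the proof of Lemma~\ref{lem2}: it suffices to show that $F|K$ is linearly disjoint from $K^{1/p}|K$. Equivalently, after applying Frobenius, $F^p|K^p$ is linearly disjoint from $K|K^p$.

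So take $a_1,\ldots,a_n\in K$ that are linearly independent over $K^p$, and suppose $\sum_i x_i^p a_i=0$ with $x_i\in F$ not all zero. Choose $j$ so that $v_P(x_j)$ is minimal, and divide the relation by $x_j^p$. This gives a relation of the same form with all $x_i\in\cO_P$ and $x_j=1$.

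Now apply the place $P$. We obtain $\sum_i (x_iP)^p a_i=0$ with $x_iP\in FP=K$ and $x_jP=1\neq 0$. This is a nontrivial $K^p$-linear relation among the $a_i$, which is a contradiction. Hence $F|K$ is separable.

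Combining the two parts, $F|K$ is regular. The only delicate point is the reduction to linear disjointness from $K^{1/p}$. That reduction has already been carried out, via the minimality of $m$, in the proof of Lemma~\ref{lem2}, so I would cite that argument verbatim rather than repeat it.
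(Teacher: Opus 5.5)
Your proof is correct. The paper gives no argument of its own for this lemma; it simply cites \cite[Lemma~2.6.9]{FJ}. Your self-contained route has two advantages.

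First, neither step uses that $F|K$ is finitely generated. You therefore prove that every field extension admitting a place $P$ that is the identity on $K$ with $FP=K$ is regular.

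Second, your argument is the exact place-theoretic counterpart of the proof of Lemma~\ref{lem2}. That proof detects non-regularity by two kinds of witnesses: a root in $F\setminus K$ of an irreducible $f\in K[X]$, and a nontrivial solution in $F$ of $\sum x_i^pa_i=0$ with the $a_i$ linearly independent over $K^p$. You push exactly these witnesses down to $K$ via $P$. Integrality of algebraic elements makes the specialization possible, and the normalization $x_j=1$ keeps it nontrivial. Your second step tacitly assumes $\mathrm{char}(K)=p>0$; in characteristic $0$ it is vacuous.

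The same normalization trick also gives regularity in one stroke, avoiding both the split into two conditions and the reduction from $K^{1/p^\infty}$ to $K^{1/p}$:
\begin{itemize}
\item Extend $P$ to a place of $F.\widetilde K$ with values in $\widetilde K$, and compose with the inverse of the $K$-automorphism it induces on $\widetilde K$. The result is the identity on $\widetilde K$ and still agrees with $P$ on $F$.
\item Take $\alpha_1,\ldots,\alpha_n\in\widetilde K$ linearly independent over $K$ and a relation $\sum x_i\alpha_i=0$ with $x_i\in F$, normalized as you did.
\item Apply the place. This yields a nontrivial $K$-linear relation among the $\alpha_i$, so $F$ and $\widetilde K$ are linearly disjoint over $K$.
\end{itemize}
That version costs an appeal to the extension theorem for places. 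Yours uses only machinery already present in the paper.
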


\sn
\begin{proof}[Proof of Theorem \ref{MT}]
By Lemma~\ref{ratpl}, $F|K$ is a regular extension. Hence there exists a geometrically
integral, smooth model $V$ of $F|K$. One can assume that $V$ is affine:
$V=\mathrm{Spec}(A)$.

The place $P$ has a unique extension $P'$ to $F':=F.K^{1/p^\infty}$, and $F'P'=
K^{1/p^\infty}$. By \cite[Prop.~2.7]{P2} $K^{1/p^\infty}$ is a large field. Thus, by
Theorem~\ref{KecF1}, $K^{1/p^\infty}\prec_\exists F'$. The variety $W:=V\times_KK^{1/
p^\infty}=\mathrm{Spec}(A\otimes_KK^{1/p^\infty})=\mathrm{Spec}(A\cdot K^{1/p^\infty})$
is a model of $F'|K^{1/p^\infty}$. Lemma \ref{lem} then yields that
$W(K^{1/p^\infty})$ is Zariski-dense in $W$.

Fix any $q\in W(K^{1/p^\infty})$ and consider the point $p:=\phi(q)$, where
$\phi:W\rightarrow V$ is the natural projection. Then $K':=A/p$ is a finite
purely inseparable extension of $K$. Now let $K''|K'$ be any algebraic extension,
$W'':=V\times_KK''=\mathrm{Spec}(A\cdot K'')$ and $\phi'':W''\rightarrow V$ the
natural projection. Then any $q''\in W''$ such that $\phi''(q'')=p$ satisfies
$(A\cdot K'')/q''=A/p\cdot K''=K''$, that is, $q''\in W''(K'')$. Since $W''$ is a
smooth $K''$-variety and $K''$ is large by \cite[Prop.~2.7]{P2},
$W''(K'')$ is Zariski-dense by {\bf (LF$'$)}. Lemma~\ref{lem}
now yields $K''\prec_{\exists}F.K''$.
\end{proof}

\begin{corollary}                         \label{corratpl}
For $F|K$ as in Theorem~\ref{MT}, there exists $n\in\mathbb{N}$ such that
$K\prec_\exists F^{p^n}K$, where $p={\rm char}(K)$.
\end{corollary}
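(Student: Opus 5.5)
Apply Theorem~\ref{MT} and then pull the result back along the Frobenius. By Theorem~\ref{MT} there is a finite purely inseparable extension $K'|K$ with $K'\prec_\exists F.K'$; we only need the case $K''=K'$. If ${\rm char}(K)=0$, then $K'=K$ and we take $n=0$. Otherwise $K'|K$ is finite and purely inseparable, so there is $n\in\N$ with $K'^{p^n}\subseteq K$, that is, $K'\subseteq K^{1/p^n}$.

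The key step is to apply the Frobenius $\varphi:x\mapsto x^{p^n}$. It is a field isomorphism from $F.K'$ onto $(F.K')^{p^n}=F^{p^n}K'^{p^n}$, and it maps $K'$ onto $K'^{p^n}$. Existential closedness is preserved by isomorphisms of pairs, since parameters are transported along. Hence $K'^{p^n}\prec_\exists F^{p^n}K'^{p^n}$. Next we compare this with $K\prec_\exists F^{p^n}K$. We have $K^{p^n}\subseteq K'^{p^n}\subseteq K$, and so
\[
F^{p^n}K'^{p^n}\subseteq F^{p^n}K .
\]
We cannot simply substitute $K$ into the statement, so a different transport is needed.

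Instead, transport the statement $K\prec_\exists F.K'$, which follows from $K'\prec_\exists F.K'$ only if also $K\prec_\exists K'$, and that fails in general. So we argue differently. Since $K\subseteq K'\subseteq K^{1/p^n}$, we get $K'.F\subseteq F.K^{1/p^n}$. Apply Theorem~\ref{MT} with $K'':=K^{1/p^n}$ instead, which is an algebraic extension of $K'$, to obtain $K^{1/p^n}\prec_\exists F.K^{1/p^n}$. Now apply $\varphi$: it maps $K^{1/p^n}$ onto $K$ and $F.K^{1/p^n}$ onto $F^{p^n}K$. Since $\varphi$ is an isomorphism of pairs, we conclude $K\prec_\exists F^{p^n}K$.

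The main point requiring care is the choice $K''=K^{1/p^n}$, where $n$ is the exponent of $K'|K$. Once that is chosen, the rest is routine. One checks that $\varphi(F.K^{1/p^n})$ is indeed the compositum $F^{p^n}K$, which holds because $\varphi$ is a ring homomorphism on the generated field, and that existential sentences with parameters transport under field isomorphisms.
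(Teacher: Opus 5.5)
Your final argument is correct and is essentially the paper's own proof: choose $n$ with $K'\subseteq K^{1/p^n}$ (the paper takes $p^n=[K':K]$), apply Theorem~\ref{MT} with $K''=K^{1/p^n}$, and transport $K^{1/p^n}\prec_\exists F.K^{1/p^n}$ along the isomorphism $x\mapsto x^{p^n}$ to obtain $K\prec_\exists F^{p^n}K$. The abandoned detour in your middle paragraph is unnecessary and can simply be deleted.
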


\begin{proof}
If $K'$ is as in the theorem,
then $K'\subseteq K'':=K^{p^{-n}}$ with $p^n=[K':K]$. It follows that $K''\prec_\exists
F.K''$, and therefore $K=(K'')^{p^n}\prec_\exists (F.K'')^{p^n}=F^{p^n}.K$.
\end{proof}

\sn
{\bf Open problem 2:} Are Theorem~\ref{MT} and Corollary~\ref{corratpl} also true if
$F|K$ is not a function field, i.e., not finitely generated?

\begin{proof}[Proof of Theorem \ref{MT2}]
Assume that the rational place $P$ of the function field $F|K$ admits local
uniformization. Then there exists a model $V$ of $F|K$ such that $P$ is centered in a
regular, $K$-rational point $x\in V$. By Lemma \ref{ratpl}, the existence of a rational
place shows that the extension $F|K$ is separable. Therefore the smooth locus of $V$ is
Zariski-open, so we can assume $V$ to be smooth. A regular, $K$-rational point is
smooth, hence $V(K)\neq\emptyset$. Since $K$ is large, $V(K)$ is dense in $V$, thus
$K\prec_\exists F$ by Lemma \ref{lem}.

Now assume that $K\prec_\exists F$. Lemma \ref{lem2} yields that $F|K$ is separable.
Therefore the smooth locus $V_\mathrm{sm}$ of any model $V$ of $F|K$ is nonempty
and Zariski-open. By Lemma \ref{lem}, $V(K)$ is dense in $V$, which implies $V_\mathrm{sm}
\cap V(K)\neq\emptyset$. Take $x\in V_\mathrm{sm}\cap V(K)$. By \cite[Cor.\ A2]{JR} (see
also \cite[Lemma 16.1.2]{FJ}) there exists a $K$-rational place $P$ centered in $x$ and
since $x$ is regular, $P$ by definition admits local uniformization.
\end{proof}

\bn\bn\bn

\end{document}